\documentclass{amsart}%
\usepackage{amssymb}
\usepackage{amsfonts}
\usepackage{amsmath}
\usepackage{graphicx}%
\setcounter{MaxMatrixCols}{30}
\providecommand{\U}[1]{\protect\rule{.1in}{.1in}}
\newtheorem{theorem}{Theorem}
\theoremstyle{plain}

\newtheorem{definition}{Definition}

\newtheorem{lemma}{Lemma}

\newtheorem{problem}{Problem}

\newtheorem{remark}{Remark}

\numberwithin{equation}{section}
\begin{document}
\title[ ]{The Hardy-Littlewood-P\'{o}lya inequality of majorization in the context of
$\mathbf{\omega}$-$\mathbf{m}$-star-convex functions}
\author{Geanina Maria L\u achescu}
\address{University of Craiova, Department of Mathematics, A.I. Cuza Street 13, Craiova
200585, ROMANIA}
\email{lachescu.geanina@yahoo.com}
\author{Ionel Roven\c ta}
\address{University of Craiova, Department of Mathematics, A.I. Cuza Street 13, Craiova
200585, ROMANIA}
\email{ionelroventa@yahoo.com}
\thanks{}
\date{January 24, 2021}
\subjclass[2000]{Primary 26B25; Secondary 26D10, 46B40, 47B60, 47H07}
\keywords{$\mathbf{\omega}$-$\mathbf{m}$-star-convex function, majorization theory,
ordered Banach space, isotone operator}
\dedicatory{ }
\begin{abstract}
The Hardy-Littlewood-P\'{o}lya inequality of majorization is extended for the
$\mathbf{\omega}$-$\mathbf{m}$-star-convex functions to the framework of
ordered Banach spaces. Several open problems which seem of interest for
further extensions of the Hardy-Littlewood-P\'{o}lya inequality are also included.

\end{abstract}
\maketitle

\section{Introduction}

The Hardy-Littlewood-P\'{o}lya theorem of majorization is an important result
in convex analysis that lies at the core of majorization theory, a subject
that attracted a great deal of attention due to its numerous applications in
mathematics, statistics, economics, quantum information etc. See
\cite{MO1979}, \cite{MOA2011}, \cite{NP2018}, \cite{NB}, \cite{PPT}, \cite{Pe}
and \cite{Ru} to cite just a few books treating this topic.

The relation of majorization was initially formulated as a relation between
the pairs of vectors with real entries rearranged downward, but nowadays
prevails its formulation as a preordering of probability measures.

For the reader's convenience we briefly recall here the most basic facts
concerning the theory of majorization.

Given two discrete probability measures $\mu=\sum_{k=1}^{N}\lambda_{k}%
\delta_{\mathbf{x}_{k}}$ and $\nu=\sum_{k=1}^{N}\lambda_{k}\delta
_{\mathbf{y}_{k}},$ supported by a compact interval $[a,b],$ we say that $\mu$
is \emph{majorized} by $\mathbf{\nu}$ $($denoted $\mu\prec\nu)$ if the
following three conditions are fulfilled:%
\[
\quad%
\begin{array}
[c]{ll}%
(M1) & \mathbf{x}_{1}\geq\mathbf{x}_{2}\geq\cdots\geq\mathbf{x}_{N}\\
(M2) & \sum_{i=1}^{k}\lambda_{i}\mathbf{x}_{i}\leq\sum_{i=1}^{k}\lambda
_{i}\mathbf{y}_{i}\quad\text{for }k=1,\dots,N;\text{ and}\\
(M3) & \sum_{i=1}^{N}\lambda_{i}\mathbf{x}_{i}=\sum_{i=1}^{N}\lambda
_{i}\mathbf{y}_{i}.
\end{array}
\]
When only conditions $(M1)$ and $(M2)$ occur, we say that $\mu$ is
\emph{weakly} \emph{majorized} by $\mathbf{\nu}$ $($denoted $\mu\prec_{w}%
\nu).$

Hardy, Littlewood and P\'{o}lya \cite{HLP} used a stronger formulation of
$(M1),$ by asking also that $\mathbf{y}_{1}\geq\mathbf{y}_{2}\geq\cdots
\geq\mathbf{y}_{N}$. Later, their result was improved by Maligranda,
Pe\v{c}ari\'{c} and Persson \cite{MPP1995} who were able to prove that
\begin{equation}
\mu\prec\nu\text{ implies}\int_{a}^{b}f~\mathrm{d}\mu=\sum_{k=1}%
^{N}f(\mathbf{x}_{k})\leq\int_{a}^{b}f~\mathrm{d}\nu=\sum_{k=1}^{N}%
f(\mathbf{y}_{k}), \tag{HLP}\label{HLP}%
\end{equation}
for all continuous convex functions $f:[a,b]\rightarrow\mathbb{R}.$ Moreover,
the same conclusion holds in the case of weak majorization and the convex and
nondecreasing functions.

Nowadays the inequality \ref{HLP} is known as the Hardy-Littlewood-P\'{o}lya
inequality of majorization.

In the early 1950s, the Hardy-Littlewood-P\'{o}lya inequality was extended by
Sherman \cite{She1951} to the case of continuous convex functions of a vector
variable by using a much larger concept of majorization, based on matrices
stochastic on lines. The full details can be found in \cite{NP2018}, Theorem
4.7.3, p. 219. Over the years, many other generalizations in the same vein
were published. See, for example, \cite{Bra}, \cite{NP2006}, \cite{NR2014},
\cite{NR2015a}, \cite{NR2015b}, \cite{NR2017} and \cite{Nie}.

As was noticed in \cite{N2021} and \cite{NO}, the Hardy-Littlewood-P\'{o}lya
inequality of majorization can be extended to the framework of convex
functions defined on ordered Banach spaces alongside the conditions
$(M1)-(M3)$. The aim of the present paper is to prove that the same works for
the larger class of $\omega$-$m$-star-convex functions.

The main features of these functions make the objective of Section 2.

In Section 3 we present different types of majorization relations in ordered
Banach spaces. The corresponding extensions of the Hardy-Littlewood-P\'{o}lya
inequality constitute the objective of Section 4. The paper ends by mentioning
several open problems which seem of interest for further extensions of the
Hardy-Littlewood-P\'{o}lya inequality.

\section{Preliminaries on $\omega$-$m$-star-convex functions}

Throughout this paper $E$ is a Banach space and $C$ is a convex subset of it.

\begin{definition}
\label{defmstar}Let $m$ be a real parameter belonging to the interval $(0,1].$
A function $\Phi:C\rightarrow\mathbb{R}$ is said to be a perturbed
$m$-star-convex function with modulus $\omega:\mathbb{[}0,\infty
\mathbb{)}\rightarrow\mathbb{R}$ $($abbreviated, $\omega$-$m$-star-convex
function$)$ if it verifies an estimate of the form%
\[
\Phi((1-\lambda)\mathbf{x}+\lambda m\mathbf{y})\leq(1-\lambda)\Phi
(\mathbf{x})+m\lambda\Phi(\mathbf{y})-m\lambda(1-\lambda)\omega\left(
\left\Vert \mathbf{x}-\mathbf{y}\right\Vert \right)  ,\text{\quad}%
\]
for all $\mathbf{x},\mathbf{y}\in C$ and $\lambda\in(0,1).$

The $\omega$-$m$-star-convex functions associated to an identically zero
modulus will be called $m$-star-convex. They verify the inequality%
\[
\Phi((1-\lambda)\mathbf{x}+\lambda m\mathbf{y})\leq(1-\lambda)\Phi
(\mathbf{x})+m\lambda\Phi(\mathbf{y}),
\]
for all $\mathbf{x},\mathbf{y}\in C$ and $\lambda$ $\in$ $(0,1)$.
\end{definition}

Notice that the usual convex functions represent the particular case of
$m$-star-convex functions where $m=1.$ On the other hand every convex function
is $m$-star-convex (for every $m\in(0,1])$ if $\mathbf{0}\in C$ and
$\Phi(\mathbf{0})\leq0.$ Indeed, we have
\begin{align*}
\Phi((1-\lambda)\mathbf{x}+\lambda m\mathbf{y})  &  =\Phi((1-\lambda
)\mathbf{x}+\lambda m\mathbf{y+(\lambda-\lambda}m\mathbf{)0})\\
&  \leq(1-\lambda)\Phi(\mathbf{x})+m\lambda\Phi(\mathbf{y})+\mathbf{(\lambda
-\lambda}m\mathbf{)\Phi(0})\\
&  =(1-\lambda)\Phi(\mathbf{x})+m\lambda\Phi(\mathbf{y}).
\end{align*}

Every $\omega$-$m$-star-convex function associated to a modulus\emph{ }%
$\omega\geq0$ is necessarily $m$-star-convex. The $\omega$-$m$-star-convex
functions whose moduli $\omega$ are strictly positive except at the origin
(where $\omega(0)=0)$ are usually called \emph{uniformly }$m$%
\emph{-star-convex}.\emph{ }In their case the definitory inequality is strict
whenever $\mathbf{x}\neq\mathbf{y}$ and $\lambda\in(0,1).$

By reversing the inequalities, one obtain the notions of $\omega$%
-$m$-\emph{star}-\emph{concave function }and\emph{ }of \emph{uniformly }%
$m$-\emph{star}-\emph{concave function.}

The theory of $m$-star-convex functions was initiated by Toader \cite{To1985},
who considered only the case of functions defined on real intervals. For
additional results in the same setting see \cite{MST} and the references therein.

\

A simple example of $(16/17)$-star-convex function which is not convex is
\begin{equation}
f:[0,\infty)\rightarrow\mathbb{R},\text{\quad}f(x)=x^{4}-5x^{3}+9x^{2}%
-5x.\label{ExMocanu}%
\end{equation}
See \cite{MST}, Example 2. Note that if $\Phi:C\rightarrow\mathbb{R}$ and
$\Psi:C\rightarrow\mathbb{R}$ are $\omega$-$m$-star-convex functions and
$\alpha,\beta\in\mathbb{R}_{+},$ then%
\[
\alpha\Phi+\beta\Psi\,\text{and }\sup\left\{  \Phi,\Psi\right\}
\]
are functions of the same nature. So is%
\[
\Phi\times\Psi:C\times C\rightarrow\mathbb{R},\text{\quad}\left(  \Phi
\times\Psi\right)  \left(  \mathbf{x},\mathbf{y}\right)  =\Phi(\mathbf{x}%
)+\Psi(\mathbf{y}).
\]
The class of $\omega$-$m$-star-convex functions is also stable under pointwise
convergence (when this exists).

Assuming $C\subset E$ is a convex cone with vertex at the origin, the
\emph{perspective }of a function%
\index{function!perspective}
$f:C\rightarrow\mathbb{R}$\ is the positively homogeneous function%
\[
\widetilde{f}:C\times(0,\infty)\rightarrow\mathbb{R},\text{\quad}\widetilde
{f}(\mathbf{x},t)=tf\left(  \frac{\mathbf{x}}{t}\right)  .
\]

\begin{lemma}
\label{lempersp}The perspective of every $m$-star-convex/concave function is a
function of the same nature.
\end{lemma}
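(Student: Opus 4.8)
The plan is to check the defining inequality of Definition \ref{defmstar} for $\widetilde{f}$ on the product cone $C\times(0,\infty)$ and to reduce it, slot by slot, to the same inequality for $f$. The structural engine is the positive homogeneity of the perspective: $\widetilde{f}(\alpha\mathbf{x},\alpha t)=\alpha\widetilde{f}(\mathbf{x},t)$ for every $\alpha>0$, which follows at once from $\widetilde{f}(\mathbf{x},t)=tf(\mathbf{x}/t)$. Because of this homogeneity the $m$-weighting carried by the second argument of the perspective and the $m$-weighting in the defining combination are forced to cooperate; in fact, for a positively homogeneous function the defining inequality is equivalent to the subadditivity estimate $\widetilde{f}(\mathbf{P}+\mathbf{Q})\le\widetilde{f}(\mathbf{P})+\widetilde{f}(\mathbf{Q})$, a reformulation in which the parameter $m$ no longer appears. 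I would keep this equivalent form in reserve as a sanity check.

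Concretely, I would fix $(\mathbf{x},s),(\mathbf{y},t)\in C\times(0,\infty)$ and $\lambda\in(0,1)$, set $u=(1-\lambda)s+\lambda m t>0$, and compute
\[
\widetilde{f}\big((1-\lambda)(\mathbf{x},s)+\lambda m(\mathbf{y},t)\big)=u\,f\!\left(\frac{(1-\lambda)\mathbf{x}+\lambda m\mathbf{y}}{u}\right).
\]
Writing $\mathbf{a}=\mathbf{x}/s$ and $\mathbf{b}=\mathbf{y}/t$, both of which lie in $C$ since $C$ is a cone, the inner point becomes
\[
\frac{(1-\lambda)\mathbf{x}+\lambda m\mathbf{y}}{u}=\frac{(1-\lambda)s}{u}\,\mathbf{a}+\frac{\lambda m t}{u}\,\mathbf{b},
\]
a convex combination of $\mathbf{a}$ and $\mathbf{b}$. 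The target is to bound $f$ at this point by $\tfrac{(1-\lambda)s}{u}f(\mathbf{a})+\tfrac{\lambda m t}{u}f(\mathbf{b})$, which after multiplication by $u$ is exactly $(1-\lambda)\widetilde{f}(\mathbf{x},s)+m\lambda\,\widetilde{f}(\mathbf{y},t)$.

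The main obstacle is the bookkeeping of the factor $m$: the weights $\tfrac{(1-\lambda)s}{u},\tfrac{\lambda m t}{u}$ produced by the perspective are symmetric in form, whereas Definition \ref{defmstar} supplies the asymmetric pair $(1-\mu),\mu m$, so the estimate one needs is not literally the defining inequality of $f$. The decisive step will be to absorb this mismatch by rescaling the second point inside the cone, replacing $\mathbf{b}$ by $\mathbf{b}/m\in C$, choosing $\mu$ so that $(1-\mu)\mathbf{a}+\mu m(\mathbf{b}/m)$ reproduces the inner point, and then invoking the $m$-star-convexity of $f$; the delicate part is to verify that, after multiplying by $u$ and using positive homogeneity, the term in $f(\mathbf{b}/m)$ recombines into exactly $m\lambda t\,f(\mathbf{b})$ rather than a larger quantity. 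Once the convex case is settled, the $m$-star-concave case follows by reversing every inequality above, the underlying algebraic identities being unchanged.
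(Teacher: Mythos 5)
Your proposal stalls exactly at the step you yourself flagged as delicate, and the obstruction is fatal. Write $u=(1-\lambda)s+\lambda mt$, $\mathbf{a}=\mathbf{x}/s$, $\mathbf{b}=\mathbf{y}/t$. Your rescaling does reproduce the inner point: with $\mu=\lambda mt/u$ and $\mathbf{c}=\mathbf{b}/m\in C$ one has $(1-\mu)\mathbf{a}+\mu m\mathbf{c}=\frac{(1-\lambda)s}{u}\mathbf{a}+\frac{\lambda mt}{u}\mathbf{b}$, and the $m$-star-convexity of $f$ then yields
\[
u\,f\left(\frac{(1-\lambda)\mathbf{x}+\lambda m\mathbf{y}}{u}\right)\leq(1-\lambda)s\,f(\mathbf{a})+\lambda m^{2}t\,f\left(\frac{\mathbf{y}}{mt}\right)=(1-\lambda)\widetilde{f}(\mathbf{x},s)+\lambda m\,\widetilde{f}(\mathbf{y},mt).
\]
To conclude you would need $\widetilde{f}(\mathbf{y},mt)\leq\widetilde{f}(\mathbf{y},t)$, that is, $m\,f(\mathbf{z}/m)\leq f(\mathbf{z})$ for $\mathbf{z}=\mathbf{y}/t$. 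But the star-convexity inequality points the other way: letting $\lambda\uparrow1$ in Definition \ref{defmstar} (for continuous $f$) gives $f(m\mathbf{w})\leq m\,f(\mathbf{w})$, hence $f(\mathbf{z})\leq m\,f(\mathbf{z}/m)$. So the term in $f(\mathbf{b}/m)$ recombines into a quantity at least as large as $m\lambda t\,f(\mathbf{b})$, not at most; the mismatch cannot be absorbed by this rescaling.

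In fact, no repair is possible when $m<1$ and $f$ is not convex, and your own ``sanity check'' proves it. Since $\widetilde{f}$ is positively homogeneous, your observation that $m$-star-convexity of $\widetilde{f}$ is equivalent to subadditivity is correct (for one direction split the argument as $(1-\lambda)\mathbf{P}+\lambda m\mathbf{Q}$ and use homogeneity on each summand; for the other, given $\mathbf{P}',\mathbf{Q}'$ apply the definition to $\mathbf{P}'/(1-\lambda)$ and $\mathbf{Q}'/(\lambda m)$). Testing subadditivity on $\mathbf{P}=(\mu\mathbf{a},\mu)$ and $\mathbf{Q}=((1-\mu)\mathbf{b},1-\mu)$ gives exactly $f(\mu\mathbf{a}+(1-\mu)\mathbf{b})\leq\mu f(\mathbf{a})+(1-\mu)f(\mathbf{b})$, so $\widetilde{f}$ is $m$-star-convex if and only if $f$ is genuinely convex. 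Consequently the lemma fails for any non-convex $m$-star-convex function, such as the paper's own example (\ref{ExMocanu}). For comparison, the paper's proof simply applies to $f$ the plain convexity inequality with the weights $\frac{(1-\lambda)s}{u}$ and $\frac{\lambda mt}{u}$, which sum to one --- precisely the step your proposal correctly refuses to take for granted, since Definition \ref{defmstar} only supplies the asymmetric pair $(1-\mu)$, $\mu m$. Your scruple thus identifies a real gap in the paper's own argument; your proposed fix does not close it, and by the subadditivity equivalence nothing can.
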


\begin{proof}
Indeed, assuming (to make a choice) that $f$ is $\omega$-$m$-star-convex, then
for all $(\mathbf{x},s),(\mathbf{y},t)\in C\times(0,\infty)$ and $\lambda
\in\lbrack0,1]$ we have%
\begin{align*}
f\left(  \frac{(1-\lambda)\mathbf{x}+\lambda m\mathbf{y}}{(1-\lambda)s+\lambda
mt}\right)   &  =f\left(  \frac{(1-\lambda)s}{(1-\lambda)s+\lambda mt}%
\cdot\frac{\mathbf{x}}{s}+\frac{\lambda mt}{(1-\lambda)s+\lambda mt}\cdot
\frac{\mathbf{y}}{t}\right) \\
&  \leq\frac{(1-\lambda)s}{(1-\lambda)s+\lambda mt}f\left(  \frac{\mathbf{x}%
}{s}\right)  +\frac{\lambda mt}{(1-\lambda)s+\lambda mt}f\left(  \frac{y}%
{t}\right)
\end{align*}
that is,
\[
\widetilde{f}((1-\lambda)\mathbf{x}+\lambda m\mathbf{y},(1-\lambda)s+\lambda
mt)\leq(1-\lambda)\widetilde{f}(\mathbf{x},s)+\lambda m\widetilde
{f}(\mathbf{y},t).
\]

\end{proof}

Lemma \ref{lempersp}, allows us easily to produce nontrivial examples of
$m$-star-convex functions of several variables with some nice properties. For
example, starting (\ref{ExMocanu}), we conclude that%
\[
\Phi(x,t)=\frac{x^{4}-5x^{3}t+9x^{2}t^{2}-5xt^{3}}{t^{3}}%
\]
is a $(16/17)$-star-convex function on $[0,\infty)\times(0,\infty).$

Under the presence of G\^{a}teaux differentiability, the $\omega$%
-$m$-star-convex functions generate specific gradient inequalities that play a
prominent role in our generalization of the Hardy-Littlewood-P\'{o}lya
inequality of majorization.

\begin{lemma}
\label{lem1} Suppose also that $C$ is an open convex subset of the Banach
space $E$ and $\Phi:C\rightarrow\mathbb{R}$ is a function both G\^{a}teaux
differentiable and $\omega$-$m$-star-convex. Then \emph{ }
\begin{equation}
m\Phi(\mathbf{y})\geq\Phi(\mathbf{x})+d\Phi(\mathbf{x})(m\mathbf{y}%
-\mathbf{x})+m\omega\left(  \left\Vert \mathbf{x}-\mathbf{y}\right\Vert
\right)  ,\label{grad_ineq}%
\end{equation}
for all points $\mathbf{x,y}\in C.$
\end{lemma}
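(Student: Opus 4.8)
The plan is to exploit the defining inequality of $\omega$-$m$-star-convexity along the segment joining $\mathbf{x}$ to the point $m\mathbf{y}$, and then to pass to the limit as the convex parameter tends to $0$, at which stage the difference quotient produces the G\^{a}teaux derivative. The key observation is the algebraic identity $(1-\lambda)\mathbf{x}+\lambda m\mathbf{y}=\mathbf{x}+\lambda(m\mathbf{y}-\mathbf{x})$, which rewrites the argument on the left-hand side of the definition as an increment of $\Phi$ at $\mathbf{x}$ in the direction $m\mathbf{y}-\mathbf{x}$.

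First I would fix $\mathbf{x},\mathbf{y}\in C$ and, for $\lambda\in(0,1)$, subtract $\Phi(\mathbf{x})$ from both sides of the defining inequality and collect terms to obtain
\begin{equation*}
\Phi(\mathbf{x}+\lambda(m\mathbf{y}-\mathbf{x}))-\Phi(\mathbf{x})\leq\lambda\left(m\Phi(\mathbf{y})-\Phi(\mathbf{x})\right)-m\lambda(1-\lambda)\omega\left(\|\mathbf{x}-\mathbf{y}\|\right).
\end{equation*}
Dividing by $\lambda>0$ then gives
\begin{equation*}
\frac{\Phi(\mathbf{x}+\lambda(m\mathbf{y}-\mathbf{x}))-\Phi(\mathbf{x})}{\lambda}\leq m\Phi(\mathbf{y})-\Phi(\mathbf{x})-m(1-\lambda)\omega\left(\|\mathbf{x}-\mathbf{y}\|\right).
\end{equation*}

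Next I would let $\lambda\to0^{+}$. Since $C$ is open and $\mathbf{x}\in C$, the point $\mathbf{x}+\lambda(m\mathbf{y}-\mathbf{x})$ remains in $C$ for all sufficiently small $\lambda$, so the difference quotient on the left is well defined and, by the assumed G\^{a}teaux differentiability of $\Phi$, converges to the directional derivative $d\Phi(\mathbf{x})(m\mathbf{y}-\mathbf{x})$. On the right-hand side the only $\lambda$-dependent factor is $(1-\lambda)$, which tends to $1$, so the bound passes to the limit and yields $d\Phi(\mathbf{x})(m\mathbf{y}-\mathbf{x})\leq m\Phi(\mathbf{y})-\Phi(\mathbf{x})-m\omega\left(\|\mathbf{x}-\mathbf{y}\|\right)$. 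A final rearrangement reproduces \eqref{grad_ineq}.

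I do not anticipate a serious obstacle: this is the standard \emph{differentiate-the-convexity-inequality} device adapted to the perturbed, $m$-weighted setting. The only points demanding care are book-keeping ones, namely keeping the factor $m$ attached both to $\Phi(\mathbf{y})$ and to the modulus $\omega$ throughout the rearrangement, and verifying that $\mathbf{x}+\lambda(m\mathbf{y}-\mathbf{x})\in C$ for small $\lambda$ so that the G\^{a}teaux limit genuinely applies; the latter is guaranteed by the openness of $C$ together with the identity $(1-\lambda)\mathbf{x}+\lambda m\mathbf{y}=\mathbf{x}+\lambda(m\mathbf{y}-\mathbf{x})$.
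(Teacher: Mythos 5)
Your proposal is correct and follows exactly the paper's own argument: rewrite the $\omega$-$m$-star-convexity inequality as a bound on the difference quotient $\bigl(\Phi(\mathbf{x}+\lambda(m\mathbf{y}-\mathbf{x}))-\Phi(\mathbf{x})\bigr)/\lambda$ and let $\lambda\rightarrow0^{+}$ to recover $d\Phi(\mathbf{x})(m\mathbf{y}-\mathbf{x})$. The only difference is that you spell out the bookkeeping (the subtraction, the division, and the openness of $C$) that the paper leaves implicit.
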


\begin{proof}
Indeed, we have
\[
\frac{\Phi((1-\lambda)\mathbf{x}+m\lambda\mathbf{y})-\Phi(\mathbf{x})}%
{\lambda}\leq-\Phi(\mathbf{x})+m\Phi(\mathbf{y}) -m(1-\lambda)\omega\left(
\left\Vert \mathbf{x}-\mathbf{y}\right\Vert \right)
\]
and the proof ends by passing to the limit as $\lambda\rightarrow0+.$
\end{proof}

\begin{remark}
\label{rem1}Lemma \ref{lem1} shows that the critical points $\mathbf{x}$ of
the differentiable $\omega$-$m$-star--convex functions for which $\omega\geq0$
verify the property%
\[
m\inf_{\mathbf{y}\in C}\Phi(\mathbf{y})\geq\Phi(\mathbf{x}).
\]

\end{remark}

Unlike the case of convex functions of one real variable, when the isotonicity
of the differential is automatic, for several variables, while this is not
necessarily true in the case of a differentiable convex function of a vector
variable. See \cite{N2021}, Remark 4.

In this paper we deal with functions defined on ordered Banach spaces, that
is, on real Banach spaces endowed with order relations $\leq$ that make them
ordered vector spaces such that the positive cones are closed and%
\[
\mathbf{0}\leq\mathbf{x}\leq\mathbf{y}\text{ implies }\left\Vert
\mathbf{x}\right\Vert \leq\left\Vert \mathbf{y}\right\Vert .
\]

The Euclidean $N$-dimensional space $\mathbb{R}^{N}$ has a natural structure
of ordered Banach space associated to the coordinatewise ordering. The usual
sequence spaces $c_{0},c,\ell^{p}$ (for $p\in\lbrack1,\infty])$ and the
function spaces $C(K)$ (for $K$ a compact Hausdorff space) and $L^{p}\left(
\mu\right)  $ (for $1\leq p\leq\infty$ and $\mu$ a $\sigma$-additive positive
measure$)$ are also examples of ordered Banach spaces (with respect to the
coordinatewise/pointwise ordering and the natural norms).

A map $T:E\rightarrow F$ between two ordered vector spaces is called
\emph{isotone} (or \emph{order preserving}) if%
\[
\mathbf{x}\leq\mathbf{y}\text{ in }E\text{ implies }T(\mathbf{x})\leq
T(\mathbf{y})\text{ in }F
\]
and \emph{antitone }(or\emph{ order reversing}) if $-T$ is isotone. When $T$
is a linear operator, $T$ is isotone if and only if $T$ maps positive elements
into positive elements (abbreviated, $T\geq0).$

For basic information on ordered Banach spaces see \cite{NO}. The interested
reader may also consult the classical books of Aliprantis and Tourky
\cite{AT2007} and Meyer-Nieberg \cite{MN}.

As was noticed by Amann \cite{Amann1974}, Proposition 3.2, p. 184, the
G\^{a}teaux differentiability offers a convenient way to recognize the
property of isotonicity of functions acting on ordered Banach spaces: the
positivity of the differential. We state here his result (following the
version given in \cite{N2021}, Lemma 4):

\begin{lemma}
\label{lemAmann_conv}Suppose that $E$ and $F$ are two ordered Banach space,
$C$ is a convex subset of $E$ with nonempty interior $\operatorname{int}C$ and
$\Phi:C\rightarrow F$ is a \emph{ }convex function, continuous on $C$ and
G\^{a}teaux differentiable on $\operatorname{int}C.$ Then $\Phi$ is isotone on
$C$ if and only if $\Phi^{\prime}(\mathbf{a})\geq0$ for all $\mathbf{a}%
\in\operatorname{int}C.$
\end{lemma}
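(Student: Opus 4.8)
The plan is to establish the two implications separately. The forward direction (isotonicity forces positivity of the differential) is a direct consequence of the definition of the Gâteaux derivative, while the reverse direction (positivity of the differential forces isotonicity) rests on a vector-valued gradient inequality together with an approximation argument from the interior. Throughout I would lean on the two structural hypotheses recorded just before the statement: that the positive cones of $E$ and $F$ are closed, and that $\operatorname{int}C \neq \emptyset$.

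For the necessity, fix $\mathbf{a} \in \operatorname{int}C$ and an arbitrary $\mathbf{h} \geq \mathbf{0}$ in $E$. For all sufficiently small $t > 0$ the point $\mathbf{a} + t\mathbf{h}$ lies in $C$ and satisfies $\mathbf{a} + t\mathbf{h} \geq \mathbf{a}$, so isotonicity gives $\Phi(\mathbf{a} + t\mathbf{h}) - \Phi(\mathbf{a}) \geq \mathbf{0}$; dividing by $t$ and letting $t \to 0+$, the closedness of the positive cone of $F$ yields $\Phi'(\mathbf{a})\mathbf{h} \geq \mathbf{0}$. Since $\mathbf{h}$ ranged over the positive cone of $E$, we conclude $\Phi'(\mathbf{a}) \geq 0$.

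For the sufficiency I would first record the gradient inequality. For $\mathbf{x} \in \operatorname{int}C$ and $\mathbf{y} \in C$, convexity gives $\Phi(\mathbf{x} + \lambda(\mathbf{y}-\mathbf{x})) \leq \Phi(\mathbf{x}) + \lambda(\Phi(\mathbf{y}) - \Phi(\mathbf{x}))$ for $\lambda \in (0,1)$, so after dividing by $\lambda$ and passing to the limit $\lambda \to 0+$ (again using the closed cone) one obtains
\[
\Phi(\mathbf{y}) \geq \Phi(\mathbf{x}) + \Phi'(\mathbf{x})(\mathbf{y} - \mathbf{x}).
\]
If in addition $\mathbf{x} \leq \mathbf{y}$, then $\mathbf{y} - \mathbf{x} \geq \mathbf{0}$, and the hypothesis $\Phi'(\mathbf{x}) \geq 0$ forces $\Phi'(\mathbf{x})(\mathbf{y}-\mathbf{x}) \geq \mathbf{0}$, whence $\Phi(\mathbf{x}) \leq \Phi(\mathbf{y})$. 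This settles isotonicity whenever the smaller of the two comparable points already lies in $\operatorname{int}C$.

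The remaining and genuinely delicate step is to remove the interiority restriction. Given arbitrary $\mathbf{x} \leq \mathbf{y}$ in $C$, I would fix a point $\mathbf{c} \in \operatorname{int}C$ and set $\mathbf{x}_s = (1-s)\mathbf{x} + s\mathbf{c}$ and $\mathbf{y}_s = (1-s)\mathbf{y} + s\mathbf{c}$ for $s \in (0,1]$. Both points lie in $\operatorname{int}C$, and the order is preserved because $\mathbf{y}_s - \mathbf{x}_s = (1-s)(\mathbf{y}-\mathbf{x}) \geq \mathbf{0}$; the interior case just established then gives $\Phi(\mathbf{x}_s) \leq \Phi(\mathbf{y}_s)$. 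Letting $s \to 0+$, continuity of $\Phi$ on $C$ yields $\Phi(\mathbf{x}_s) \to \Phi(\mathbf{x})$ and $\Phi(\mathbf{y}_s) \to \Phi(\mathbf{y})$, and the closedness of the positive cone of $F$ upgrades the inequality to $\Phi(\mathbf{x}) \leq \Phi(\mathbf{y})$. The crux is precisely this homotopy toward a common interior point governed by a single parameter $s$: it is what simultaneously keeps both points in $\operatorname{int}C$ and preserves the order relation, the two features that a naive perturbation of $\mathbf{x}$ alone would destroy.
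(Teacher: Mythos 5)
Your proof is correct in every step, but note that the paper itself contains no proof of this lemma: it is quoted as a known result of Amann (Proposition 3.2 of \cite{Amann1974}), in the version given in \cite{N2021}, Lemma 4, so your argument supplies details the paper deliberately leaves to the references. Both implications are handled soundly. For necessity, you correctly combine isotonicity, the definition of the G\^{a}teaux derivative along a positive direction $\mathbf{h}$, and the norm-closedness of the positive cone of $F$ to pass the inequality to the limit. For sufficiency, the vector-valued gradient inequality $\Phi(\mathbf{y})\geq\Phi(\mathbf{x})+\Phi^{\prime}(\mathbf{x})(\mathbf{y}-\mathbf{x})$ follows exactly as you say from order-convexity plus the closed cone, and it settles the case where the smaller point is interior. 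The final homotopy $\mathbf{x}_{s}=(1-s)\mathbf{x}+s\mathbf{c}$, $\mathbf{y}_{s}=(1-s)\mathbf{y}+s\mathbf{c}$ toward a common interior point $\mathbf{c}$ is the right device for removing the interiority restriction: the half-open segment from a point of $C$ to a point of $\operatorname{int}C$ lies in $\operatorname{int}C$, the order is preserved because $\mathbf{y}_{s}-\mathbf{x}_{s}=(1-s)(\mathbf{y}-\mathbf{x})\geq\mathbf{0}$, and the continuity of $\Phi$ on all of $C$ together with the closed positive cone of $F$ lets you pass to the limit $s\rightarrow0+$. This scheme is essentially the standard one behind the cited results, so your proposal can be read as a self-contained reconstruction of the proof the paper omits; the one hypothesis you should flag explicitly as used (and which the paper's standing assumptions guarantee) is that the positive cone of $F$ is norm-closed, since every limiting step in your argument rests on it.
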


\begin{remark}
\label{rem2}If the ordered Banach space $E$ has finite dimension, then the
statement of Lemma \emph{\ref{lemAmann_conv}} remains valid by replacing the
interior of $C$ by the relative interior of $C$. See \emph{\cite{NP2018}},
Exercise $6$, p. $81$.
\end{remark}

As was noticed in \cite{MST}, Example $7$, the function%
\[
\gamma:(-\infty,1]\rightarrow\mathbb{R},\quad\gamma(x)=-2x^{3}+5x^{2}+6x.
\]
is convex on $(-\infty,5/6]$, concave on $[5/6,1]$, and $m$-star-convex on
$(-\infty,1],$ with $m=27/28.$ The last assertion follows from a formula due
to Mocanu,
\[
m=\inf\Big\{\frac{x\gamma^{\prime}(x)-\gamma(x)}{y\gamma^{\prime}%
(x)-\gamma(y)}:y\gamma^{\prime}(x)-\gamma(y),\,x,y\in I\Big\},
\]
mentioned at the bottom of page 72 in \cite{MST}.

Proceeding like in Lemma \ref{lempersp}, one can prove that the function
associated to $\gamma,$
\[
\Upsilon:(-\infty,1]\times\lbrack1,\infty)\rightarrow\mathbb{R},\text{\quad
}\Upsilon(x,y)=-\frac{2x^{3}}{y^{2}}+\frac{5x^{2}}{y}+6x,
\]
is $27/28$-star-convex. The function $\Upsilon$ is also Gateaux
differentiable, with%
\[
(x,y)=\allowbreak\left(  \frac{1}{y^{2}}\left(  -6x^{2}+10xy+6y^{2}\right)
,\allowbreak\frac{x^{2}}{y^{3}}\left(  4x-5y\right)  \right)  .
\]

According to Lemma \ref{lemAmann_conv}, the map%
\[
d\Upsilon:(-\infty,1]\times\lbrack1,\infty)\subset\mathbb{R}^{2}%
\rightarrow\mathbb{R}^{2}%
\]
is isotone on the domain where $d^{2}\Upsilon=d(d\Upsilon)$ is positive, that
is, where the Hessian of $\Upsilon,$%
\[
\left(
\begin{array}
[c]{cc}%
-\frac{2}{y^{2}}\left(  6x-5y\right)  & 2\frac{x}{y^{3}}\left(  6x-5y\right)
\\
2\frac{x}{y^{3}}\left(  6x-5y\right)  & \allowbreak-2\frac{x^{2}}{y^{4}%
}\left(  6x-5y\right)
\end{array}
\right)  ,
\]
has all entries nonnegative. Therefore $d\Upsilon$ is isotone on
$(-\infty,1]\times\lbrack1,\infty).$

\section{The majorization relation on ordered Banach spaces}

In this section we discuss the concept of majorization into the framework of
ordered Banach spaces. Since in an ordered Banach space not every string of
elements admits a decreasing rearrangement, in this paper we will concentrate
to the case of pairs of discrete probability measures of which at least one of
them is supported by a monotone string of points. The case where the support
of the left measure consists of a decreasing string is defined as follows.

\begin{definition}
\label{def2}Suppose that $\sum_{k=1}^{N}\lambda_{k}\delta_{\mathbf{x}_{k}}$
and $\sum_{k=1}^{N}\lambda_{k}\delta_{\mathbf{y}_{k}}$ are two discrete Borel
probability measures that act on the ordered Banach space $E$ and $m\in(0,1]$
is a parameter. We say that $\sum_{k=1}^{N}\lambda_{k}\delta_{\mathbf{x}_{k}}$
is weakly $mL^{\downarrow}$-majorized by $\sum_{k=1}^{N}\lambda_{k}%
\delta_{\mathbf{y}_{k}}$ \emph{(}denoted $\sum_{k=1}^{N}\lambda_{k}%
\delta_{\mathbf{x}_{k}}\prec_{wmL^{\downarrow}}\sum_{k=1}^{N}\lambda_{k}%
\delta_{\mathbf{y}_{k}}$\emph{)} if the left hand measure is supported by a
decreasing string of points
\begin{equation}
\mathbf{x}_{1}\geq\cdots\geq\mathbf{x}_{N} \label{x}%
\end{equation}
and%
\begin{equation}
\sum_{k=1}^{n}\lambda_{k}\mathbf{x}_{k}\leq\sum_{k=1}^{n}\lambda
_{k}m\mathbf{y}_{k}\quad\text{for all }n\in\{1,\dots,N\}. \label{maj1}%
\end{equation}

We say that $\sum_{k=1}^{N}\lambda_{k}\delta_{\mathbf{x}_{k}}$ is
$mL^{\downarrow}$-majorized by $\sum_{k=1}^{N}\lambda_{k}\delta_{\mathbf{y}%
_{k}}$ $($denoted \linebreak$\sum_{k=1}^{N}\lambda_{k}\delta_{\mathbf{x}_{k}%
}\prec_{mL^{\downarrow}}\sum_{k=1}^{N}\lambda_{k}\delta_{\mathbf{y}_{k}})$ if
in addition%
\begin{equation}
\sum_{k=1}^{N}\lambda_{k}\mathbf{x}_{k}=\sum_{k=1}^{N}\lambda_{k}%
m\mathbf{y}_{k}. \label{maj2}%
\end{equation}

\end{definition}

Notice that the context of Definition \ref{def2} makes necessary that all
weights $\lambda_{1},...,\lambda_{N}$ belong to $(0,1]$ and $\sum_{k=1}%
^{N}\lambda_{k}=1.$

The three conditions (\ref{x}), (\ref{maj1}) and (\ref{maj2}) imply
$m\mathbf{y}_{N}\leq\mathbf{x}_{N}\leq\mathbf{x}_{1}\leq$ $m\mathbf{y}_{1}$
but not the ordering $\mathbf{y}_{1}\geq\cdots\geq\mathbf{y}_{N}.$ For
example, when $N=3,$ one may consider the case where
\[
m=1,\text{\ }\lambda_{1}=\lambda_{2}=\lambda_{3}=1/3,\text{ }\mathbf{x}%
_{1}=\mathbf{x}_{2}=\mathbf{x}_{3}=\mathbf{x}%
\]
and%
\[
\mathbf{y}_{1}=\mathbf{x},\text{ }\mathbf{y}_{2}=\mathbf{x+z,}\text{
}\mathbf{y}_{3}=\mathbf{x-z,}%
\]
$\mathbf{z}$ being any positive element.

Under these circumstances it is natural to introduce the following companion
to Definition \ref{def2}, involving the ascending strings of elements as
support for the right hand measure.

\begin{definition}
\label{def3}The relation of\emph{ weak }$mR^{\uparrow}$-\emph{majorization},
\[
\sum_{k=1}^{N}\lambda_{k}\delta_{\mathbf{x}_{k}}\prec_{wmR^{\uparrow}}%
\sum_{k=1}^{N}\lambda_{k}\delta_{\mathbf{y}_{k}},
\]
between two discrete Borel probability measures means the fulfillment of the
condition $($\emph{\ref{maj1}}$)$ under the presence of the ordering%
\begin{equation}
\mathbf{y}_{1}\leq\cdots\leq\mathbf{y}_{N}; \label{y}%
\end{equation}
assuming in addition the condition $($\emph{\ref{maj2}}$)$\emph{,} we say that
$\sum_{k=1}^{N}\lambda_{k}\delta_{\mathbf{x}_{k}}$ is $mR^{\uparrow}%
$-majorized by $\sum_{k=1}^{N}\lambda_{k}\delta_{\mathbf{y}_{k}}$ $($denoted
$\sum_{k=1}^{N}\lambda_{k}\delta_{\mathbf{x}_{k}}\prec_{mR^{\uparrow}}%
\sum_{k=1}^{N}\lambda_{k}\delta_{\mathbf{y}_{k}})$.
\end{definition}

When every element of $E$ is the difference of two positive elements, the weak
majorization relations $\prec_{mL^{\downarrow}}$and $\prec_{mR^{\uparrow}}$
can be augmented so to obtain majorization relations.

\section{The extension of the Hardy-Littlewood-Polya inequality of
majorization}

The objective of this section is to consider the corresponding extensions of
the Hardy-Littlewood-P\'{o}lya inequality of majorization for $\prec
_{wmL^{\downarrow}},\prec_{mL^{\downarrow}},\prec_{wmR^{\uparrow}}$and
$\prec_{mR^{\uparrow}}$. Moreover, we present also a Sherman's type inequality.

The proof of the following theorem is inspired by the techniques succesfully
used in \cite{MPP1995} and \cite{N2021}.

\begin{theorem}
\label{thmHLPgen}Suppose that $\sum_{k=1}^{N}\lambda_{k}\delta_{\mathbf{x}%
_{k}}$ and $\sum_{k=1}^{N}\lambda_{k}\delta_{\mathbf{y}_{k}}$ are two discrete
probability measures whose supports are included in an open convex subset $C$
of the ordered Banach space $E$. If $\sum_{k=1}^{N}\lambda_{k}\delta
_{\mathbf{x}_{k}}\prec_{mL^{\downarrow}}\sum_{k=1}^{N}\lambda_{k}%
\delta_{\mathbf{y}_{k}},$ then%
\begin{equation}
m\sum_{k=1}^{N}\lambda_{k}\Phi(\mathbf{y}_{k})\geq\sum_{k=1}^{N}\lambda
_{k}\Phi(\mathbf{x}_{k})+\sum_{k=1}^{N}\lambda_{k}\omega(\left\Vert
\mathbf{x}_{k}-\mathbf{y}_{k}\right\Vert ), \label{Cons1}%
\end{equation}
for every G\^{a}teaux differentiable $\omega$-$m$-star-convex function
$\Phi:C\rightarrow F$ whose differential is isotone and verifies the
hypotheses of Lemma \ref{lem1}.

The conclusion $(\ref{Cons1})$ still works under the weaker hypothesis
$\sum_{k=1}^{N}\lambda_{k}\delta_{\mathbf{x}_{k}}\prec_{wmL^{\downarrow}}%
\sum_{k=1}^{N}\lambda_{k}\delta_{\mathbf{y}_{k}},$ provided that $\Phi$ is
also an isotone function.
\end{theorem}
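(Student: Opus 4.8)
The plan is to establish the gradient inequality \eqref{grad_ineq} at each point $\mathbf{x}_k$ and then combine these estimates using an Abel summation (summation-by-parts) argument, which is the standard device in the proofs of \cite{MPP1995} and \cite{N2021}. First, applying Lemma \ref{lem1} with $\mathbf{x}=\mathbf{x}_k$ and $\mathbf{y}=\mathbf{y}_k$ gives, for each $k$,
\[
m\Phi(\mathbf{y}_k)\geq\Phi(\mathbf{x}_k)+d\Phi(\mathbf{x}_k)(m\mathbf{y}_k-\mathbf{x}_k)+m\omega\!\left(\left\Vert \mathbf{x}_k-\mathbf{y}_k\right\Vert\right).
\]
Multiplying by $\lambda_k$ and summing over $k$, the desired conclusion \eqref{Cons1} will follow (after dividing the modulus term appropriately, since the claimed inequality carries $\lambda_k\omega(\cdot)$ rather than $m\lambda_k\omega(\cdot)$, so one uses $m\leq 1$ together with $\omega\geq 0$ to weaken the bound) provided one can show that the \emph{cross term}
\[
S:=\sum_{k=1}^{N}\lambda_k\,d\Phi(\mathbf{x}_k)(m\mathbf{y}_k-\mathbf{x}_k)
\]
is nonnegative in $F$.

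The heart of the argument is therefore to prove $S\geq\mathbf{0}$, and this is where the majorization hypotheses and the isotonicity of the differential enter. I would introduce the partial-sum vectors $\mathbf{S}_n:=\sum_{k=1}^{n}\lambda_k(m\mathbf{y}_k-\mathbf{x}_k)$ for $n=1,\dots,N$, together with $\mathbf{S}_0=\mathbf{0}$. By hypothesis \eqref{maj1} we have $\mathbf{S}_n\geq\mathbf{0}$ for every $n$, and the equality condition \eqref{maj2} gives $\mathbf{S}_N=\mathbf{0}$. Writing $\lambda_k(m\mathbf{y}_k-\mathbf{x}_k)=\mathbf{S}_k-\mathbf{S}_{k-1}$ and applying Abel summation, one obtains
\[
S=\sum_{k=1}^{N}d\Phi(\mathbf{x}_k)(\mathbf{S}_k-\mathbf{S}_{k-1})
 =d\Phi(\mathbf{x}_N)\mathbf{S}_N+\sum_{k=1}^{N-1}\bigl(d\Phi(\mathbf{x}_k)-d\Phi(\mathbf{x}_{k+1})\bigr)\mathbf{S}_k.
\]
The first term vanishes since $\mathbf{S}_N=\mathbf{0}$. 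For the surviving sum, the ordering \eqref{x} gives $\mathbf{x}_k\geq\mathbf{x}_{k+1}$, so the assumed isotonicity of $d\Phi$ yields $d\Phi(\mathbf{x}_k)\geq d\Phi(\mathbf{x}_{k+1})$ as positive operators; that is, each difference $d\Phi(\mathbf{x}_k)-d\Phi(\mathbf{x}_{k+1})$ is a positive linear map. Since it is applied to the positive element $\mathbf{S}_k\geq\mathbf{0}$, each summand lies in the positive cone of $F$, whence $S\geq\mathbf{0}$.

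The main obstacle I anticipate is making the Abel-summation step rigorous in the operator-valued setting: the quantities $d\Phi(\mathbf{x}_k)$ are linear operators from $E$ to $F$, and I must be careful that ``$d\Phi(\mathbf{x}_k)\geq d\Phi(\mathbf{x}_{k+1})$'' means precisely that their difference is a positive operator (maps the positive cone of $E$ into the positive cone of $F$), so that evaluating on $\mathbf{S}_k\geq\mathbf{0}$ does produce an element $\geq\mathbf{0}$ in $F$. This is exactly the content guaranteed by isotonicity of $d\Phi$ together with \eqref{x}, but the bookkeeping between the vector order on $E$, the operator order, and the order on $F$ must be stated cleanly. For the final assertion concerning the weak relation $\prec_{wmL^{\downarrow}}$, the equality \eqref{maj2} is unavailable, so the boundary term $d\Phi(\mathbf{x}_N)\mathbf{S}_N$ no longer drops out; here I would invoke the \emph{additional} hypothesis that $\Phi$ is isotone, which by Lemma \ref{lemAmann_conv} forces $d\Phi(\mathbf{x}_N)\geq0$, and since $\mathbf{S}_N\geq\mathbf{0}$ by \eqref{maj1} the boundary term is itself $\geq\mathbf{0}$, so the same conclusion survives.
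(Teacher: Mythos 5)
Your proof follows the paper's own argument essentially step for step: the gradient inequality of Lemma \ref{lem1} at each pair $(\mathbf{x}_k,\mathbf{y}_k)$, Abel summation of the cross term against the partial sums $\mathbf{S}_n=\sum_{k=1}^{n}\lambda_k(m\mathbf{y}_k-\mathbf{x}_k)$, positivity of each summand $\bigl(d\Phi(\mathbf{x}_k)-d\Phi(\mathbf{x}_{k+1})\bigr)\mathbf{S}_k$ from the isotonicity of $d\Phi$ together with \eqref{x} and \eqref{maj1}, the vanishing of the boundary term via \eqref{maj2}, and, in the weak case, $d\Phi(\mathbf{x}_N)\geq 0$ deduced from the isotonicity of $\Phi$. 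All of that matches the paper's proof, including the careful bookkeeping between the order on $E$, the operator order on $L(E,F)$, and the order on $F$.

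The one place where you deviate is the parenthetical handling of the modulus term, and there your reasoning runs backwards. Summing Lemma \ref{lem1} with weights $\lambda_k$ produces the remainder $m\sum_{k=1}^{N}\lambda_k\omega(\Vert\mathbf{x}_k-\mathbf{y}_k\Vert)$, whereas \eqref{Cons1} asserts the bound with $\sum_{k=1}^{N}\lambda_k\omega(\Vert\mathbf{x}_k-\mathbf{y}_k\Vert)$. For $\omega\geq 0$ and $m\leq 1$ one has $m\omega\leq\omega$, so the inequality you actually obtain is the \emph{weaker} of the two: invoking ``$m\leq 1$ and $\omega\geq 0$'' lets you pass from the $\omega$-bound \emph{down} to the $m\omega$-bound, never up. Hence what your argument establishes is
\[
m\sum_{k=1}^{N}\lambda_{k}\Phi(\mathbf{y}_{k})\geq\sum_{k=1}^{N}\lambda
_{k}\Phi(\mathbf{x}_{k})+m\sum_{k=1}^{N}\lambda_{k}\omega\left(  \left\Vert
\mathbf{x}_{k}-\mathbf{y}_{k}\right\Vert \right)  ,
\]
and not \eqref{Cons1} as literally stated. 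You should know, however, that the paper's own proof commits the identical slip silently: immediately after invoking \eqref{grad_ineq} it writes the remainder as $\sum_{k}\lambda_k\omega(\cdot)$, although the lemma supplies only $m\sum_{k}\lambda_k\omega(\cdot)$. The missing factor $m$ is best regarded as a defect in the statement of the theorem (it is invisible when $m=1$ or $\omega\equiv 0$); the honest conclusion of both your argument and the paper's is the displayed inequality above, and you would do better to state that explicitly than to bridge the discrepancy with an implication that goes in the wrong direction.
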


\

\begin{proof}
According to the gradient inequality \eqref{grad_ineq}, we have
\begin{multline*}
m\sum_{k=1}^{N}\lambda_{k}\Phi(\mathbf{y}_{k})-\sum_{k=1}^{N}\lambda_{k}%
\Phi(\mathbf{x}_{k})=\sum_{k=1}^{N}\lambda_{k}\left(  m\Phi(\mathbf{y}%
_{k})-\Phi(\mathbf{x}_{k})\right) \\
\geq\sum_{k=1}^{N}\Phi^{\prime}(\mathbf{x}_{k})(\lambda_{k}m\mathbf{y}%
_{k}-\lambda_{k}\mathbf{x}_{k})+\sum_{k=1}^{N}\lambda_{k}\omega\left(
\left\Vert \mathbf{x}_{k}-\mathbf{y}_{k}\right\Vert \right)  ,
\end{multline*}
whence, by using Abel's trick of interchanging the order of summation
(\cite{NP2018}, Theorem 1.9.5, p. 57), one obtains%
\begin{multline*}
\sum_{k=1}^{N}\lambda_{k}m\Phi(\mathbf{y}_{k})-\sum_{k=1}^{N}\lambda_{k}%
\Phi(\mathbf{x}_{k})-\sum_{k=1}^{N}\lambda_{k}\omega\left(  \left\Vert
\mathbf{x}_{k}-\mathbf{y}_{k}\right\Vert \right) \\
\geq\Phi^{\prime}(\mathbf{x}_{1})(\lambda_{1}m\mathbf{y}_{1}-\lambda
_{1}\mathbf{x}_{1})+\sum_{m=2}^{N}\Phi^{\prime}(\mathbf{x}_{m})\Bigl[\sum
_{k=1}^{m}(\lambda_{k}\mathbf{y}_{k}-\lambda_{k}\mathbf{x}_{k})-\sum
_{k=1}^{m-1}(\lambda_{k}\mathbf{y}_{k}-\lambda_{k}\mathbf{x}_{k})\Bigr]\\
=\sum_{m=1}^{N-1}\Bigl[(\Phi^{\prime}(\mathbf{x}_{m})-\Phi^{\prime}%
(\mathbf{x}_{m+1}))\sum_{k=1}^{m}(\lambda_{k}m\mathbf{y}_{k}-\lambda
_{k}\mathbf{x}_{k})\Bigr]+\Phi^{\prime}(\mathbf{x}_{N})\left(  \sum_{k=1}%
^{N}(\lambda_{k}m\mathbf{y}_{k}-\lambda_{k}\mathbf{x}_{k})\right)  .
\end{multline*}
When $\sum_{k=1}^{N}\lambda_{k}\delta_{\mathbf{x}_{k}}\prec_{mL^{\downarrow}%
}\sum_{k=1}^{N}\lambda_{k}\delta_{\mathbf{y}_{k}},$ the last term vanishes and
the fact that $D\geq0$ is a consequence of the isotonicity of $\Phi^{\prime}.$
When $\sum_{k=1}^{N}\lambda_{k}\delta_{\mathbf{x}_{k}}\prec_{wmL^{\downarrow}%
}\sum_{k=1}^{N}\lambda_{k}\delta_{\mathbf{y}_{k}}$ and $\Phi$ is isotone, one
applies Lemma \ref{lemAmann_conv} $(a)$ to infer that%
\[
\Phi^{\prime}(\mathbf{x}_{N})\left(  \sum_{k=1}^{N}(\lambda_{k}m\mathbf{y}%
_{k}-\lambda_{k}\mathbf{x}_{k})\right)  \geq0.
\]

The other cases can be treated in a similar way.
\end{proof}

\begin{remark}
Even in the context of usual convex functions, the isotonicity of the
differential is not only sufficient but also \emph{necessary} for the validity
of Theorem \ref{thmHLPgen}. See \cite{N2021}, Remark 5.
\end{remark}

We leave to the reader as an exercise the problem of formulating the variant
of Theorem \ref{thmHLPgen} in the case of relations $\prec_{wmR^{\uparrow}}%
$and $\prec_{mR^{\uparrow}}.$

\section{Further results and open problems}

The aim of this section is to mention some open problems which might be of
interest for further research on .

Notice first that any perturbation of an $\omega$-$m$-star-convex function
$\Phi$ satisfying the hypotheses of Theorem \ref{thmHLPgen} by a bounded
function $\Pi$ verify an inequality of majorization very close to
(\ref{Cons1}). Precisely, if $\left\vert \Pi\right\vert \leq\delta$ and
$\sum_{k=1}^{N}\lambda_{k}\delta_{\mathbf{x}_{k}}\prec_{mL^{\downarrow}}%
\sum_{k=1}^{N}\lambda_{k}\delta_{\mathbf{y}_{k}},$ then $\Psi=\Phi+\Pi$ will
verify the relation%
\[
m\sum_{k=1}^{N}\lambda_{k}\Phi(\mathbf{y}_{k})\geq\sum_{k=1}^{N}\lambda
_{k}\Phi(\mathbf{x}_{k})+\sum_{k=1}^{N}\lambda_{k}\omega(\left\Vert
\mathbf{x}_{k}-\mathbf{y}_{k}\right\Vert )-(1+m)\delta.
\]

This call the attention to the following class of \emph{approximately}
$\omega$-$m$-star-convex functions:

\begin{definition}
A function $\Phi:C\rightarrow\mathbb{R}$ is said to be $\delta$-$\omega$%
-$m$-star-convex function if it verifies an estimate of the form%
\[
\Phi((1-\lambda)\mathbf{x}+\lambda m\mathbf{y})\leq(1-\lambda)\Phi
(\mathbf{x})+m\lambda\Phi(\mathbf{y})-m\lambda(1-\lambda)\omega\left(
\left\Vert \mathbf{x}-\mathbf{y}\right\Vert \right)  +\delta,\text{\quad}%
\]
for some $\delta\geq0$ and all $\mathbf{x},\mathbf{y}\in C$ and $\lambda
\in(0,1).$
\end{definition}

The above definition extends (for $\omega=0$ and $m=1)$ the concept of
$\delta$-convex function, first considered by Hyers and Ulam \cite{HU1952} in
a paper dedicated to the stability of convex functions. It is natural to rise
the problem wheather their result extends to the framework of $\delta$%
-$\omega$-$m$-star-convex functions:

\begin{problem}
Suppose that $C$ is a convex subset of $\mathbb{R}^{N}.$ Is that true that
every $\delta$-$\omega$-$m$-star-convex function $\Phi:C\rightarrow\mathbb{R}$
can be written as $\Phi=\Psi+\Pi$, where $\Psi$ is an $\omega$-$m$-star-convex
function and $\Pi$ is a bounded function whose supremum norm is not larger
than $k_{N}\delta$ , where the positive constant $k_{N}$ depends only on the
dimension $N$ of the underlying space?
\end{problem}

Of some interest seems to be the concept of local approximate $m$%
-star-convexity suggested by \cite{DG2004}, Definition 1, which clearly yields
new extensions of the majorization inequality:

\begin{definition}
A function $\Phi:C\rightarrow\mathbb{R}$ is called locally approximately
$m$-star-convex if for every $x_{0}\in C$, and every $\varepsilon>0$ there
exists $\delta>0$ such that for all $x,y$ in the open ball of center $x_{0}$
and radius $\delta$ and all $\lambda\in(0,1),$%
\[
\Phi((1-\lambda)x+m\lambda y)\leq(1-\lambda)\Phi(x)+m\lambda\Phi(y)+\U{3b5}
t(1-t)\left\Vert x-y\right\Vert .
\]

\end{definition}

The whole discussion above can be placed in the more general context of
$M_{p}$-convexity.

Recall that the weighted $M_{p}$-mean is defined for every pair of positive
numbers $a,b$ by the formula%
\[
M_{p}(a,b;1-\lambda,\lambda)=\left\{
\begin{array}
[c]{cl}%
((1-\lambda)a^{p}+\lambda b^{p})^{1/p}, & \text{if }p\in\mathbb{R}%
\backslash\{0\}\\
a^{1-\lambda}b^{\lambda}, & \text{if }p=0\\
\max\{a,b\}, & \text{if }p=\infty,
\end{array}
\right.
\]
where $\lambda\in\lbrack0,1].$ If $p>0,$ then it is usual to extend $M_{p}$ to
all pairs of nonnegative numbers.

\begin{definition}
A function $\Phi:C\rightarrow\mathbb{R}$ is called $\omega$-$m$-$M_{p}%
$-star-convex if there exist a number $p\in\mathbb{R}$ and a modulus
$\omega:\mathbb{[}0,\infty\mathbb{)}\rightarrow\mathbb{R}$ such that
\begin{equation}
\Phi\left(  \left(  1-\lambda\right)  \mathbf{x}+\lambda\mathbf{y}\right)
\leq((1-\lambda)\Phi(\mathbf{x})^{p}+m\lambda\Phi(\mathbf{y})^{p}%
)^{1/p}-m\lambda(1-\lambda)\omega\left(  \left\Vert \mathbf{x}-\mathbf{y}%
\right\Vert \right)  ,\text{\quad}\nonumber\label{whm}%
\end{equation}
for all $\mathbf{x},\mathbf{y}\in C$ and $\lambda\in(0,1).$

Reversing the inequality one obtain the concept of $\omega$-$m$-$M_{p}%
$-star-concave functions.
\end{definition}

The usual $M_{p}$-convex/$M_{p}$-concave functions represent the particular
case where $m=1$ and $\omega=0.$

It is worth noticing that the $M_{p}$-convex ($M_{p}$-concave) functions for
$p\neq0$ are precisely the functions $\Phi$ such that $\Phi^{p}$ is convex
(concave), while the $M_{0}$-convex ($M_{0}$-concave) functions are nothing
but the $\log$-convex ($\log$-concave) functions. Notice also that the
$M_{\infty}$-convex ($M_{-\infty}$-concave) functions are precisely the
quasi-convex (quasi-concave) functions.

The next result represents the extension of Lemma \ref{lem1} to the case of
$\omega$-$m$-$M_{p}$-star-convex functions.

\begin{lemma}
\label{lem4}Suppose that $C$ is an open convex subset of the Banach space $E$
and $\Phi:C\rightarrow\mathbb{R}_{+}$ is a function both G\^{a}teaux
differentiable and $\omega$-$m$-$M_{p}$-star-convex. If $p\neq0,$ then $\Phi$
verifies the inequality \emph{ }
\[
\Phi^{p}(\mathbf{y})\geq\Phi^{p}(\mathbf{x})+p\Phi(\mathbf{x})^{p-1}%
d\Phi(\mathbf{x})(\mathbf{y}-\mathbf{x})+m\omega\left(  \left\Vert
\mathbf{x}-\mathbf{y}\right\Vert \right)  ,
\]
for all $\mathbf{x,y}\in C.$ 
\end{lemma}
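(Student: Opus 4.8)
The plan is to replay the short argument behind Lemma~\ref{lem1}, the only genuinely new ingredient being the differentiation of the generalized mean on the right-hand side of the definition. Fix $\mathbf{x},\mathbf{y}\in C$ and, since the exponents below require a positive base, first treat the generic case $\Phi(\mathbf{x})>0$, postponing the points where $\Phi$ vanishes. Abbreviate $r=\left\Vert \mathbf{x}-\mathbf{y}\right\Vert$ and set
\[
h(\lambda)=\bigl((1-\lambda)\Phi(\mathbf{x})^{p}+m\lambda\Phi(\mathbf{y})^{p}\bigr)^{1/p},\qquad \lambda\in[0,1],
\]
so that the defining inequality of $\omega$-$m$-$M_{p}$-star-convexity becomes $\Phi\bigl((1-\lambda)\mathbf{x}+\lambda\mathbf{y}\bigr)\le h(\lambda)-m\lambda(1-\lambda)\omega(r)$.

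First I would subtract $\Phi(\mathbf{x})=h(0)$, divide by $\lambda>0$, and pass to the limit $\lambda\to 0+$. Writing $(1-\lambda)\mathbf{x}+\lambda\mathbf{y}=\mathbf{x}+\lambda(\mathbf{y}-\mathbf{x})$, G\^{a}teaux differentiability sends the left difference quotient to $d\Phi(\mathbf{x})(\mathbf{y}-\mathbf{x})$, while the perturbation term contributes $-m\omega(r)$ in the limit. Because $\Phi(\mathbf{x})^{p}>0$, the scalar map $h$ is differentiable at $0$ and the chain rule evaluates the right-hand quotient to
\[
h'(0)=\tfrac{1}{p}\,\Phi(\mathbf{x})^{1-p}\bigl(m\Phi(\mathbf{y})^{p}-\Phi(\mathbf{x})^{p}\bigr).
\]
Collecting these limits produces the scalar inequality $d\Phi(\mathbf{x})(\mathbf{y}-\mathbf{x})+m\omega(r)\le h'(0)$.

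The last step is to clear the coefficient $\tfrac{1}{p}\Phi(\mathbf{x})^{1-p}$ by multiplying through by $p\,\Phi(\mathbf{x})^{p-1}$ and to rearrange, recognizing $p\,\Phi(\mathbf{x})^{p-1}d\Phi(\mathbf{x})(\mathbf{y}-\mathbf{x})$ as the G\^{a}teaux differential $d(\Phi^{p})(\mathbf{x})(\mathbf{y}-\mathbf{x})$ of the composite $\Phi^{p}$; rearrangement then delivers the asserted gradient inequality for $\Phi^{p}$. The main obstacle is the sign bookkeeping attached to the exponent: the multiplier $p\,\Phi(\mathbf{x})^{p-1}$ is positive when $p>0$ but negative when $p<0$, so for $p<0$ the multiplication reverses the inequality, and one must verify that this reversal is exactly compensated by the fact that $t\mapsto t^{p}$ is then order-reversing on $\mathbb{R}_{+}$, i.e. that the convex/concave roles of $\Phi^{p}$ switch consistently. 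A secondary technical point is the boundary case $\Phi(\mathbf{x})=0$, where $\Phi(\mathbf{x})^{p-1}$ is singular and $h$ need not be differentiable at $\lambda=0$; there I would either pass to a limit over nearby points with $\Phi>0$ or observe that the inequality degenerates, rather than differentiate directly.
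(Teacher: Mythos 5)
The paper states Lemma \ref{lem4} without any proof (it is offered as the unproved analogue of Lemma \ref{lem1}), so your attempt can only be judged against the statement itself; your differentiation scheme is indeed the natural one, and it is carried out correctly up to the scalar limit inequality: with $h(\lambda)=((1-\lambda)\Phi(\mathbf{x})^{p}+m\lambda\Phi(\mathbf{y})^{p})^{1/p}$ and $\Phi(\mathbf{x})>0$, dividing by $\lambda$ and letting $\lambda\to0+$ gives
\[
d\Phi(\mathbf{x})(\mathbf{y}-\mathbf{x})+m\omega\left(\left\Vert \mathbf{x}-\mathbf{y}\right\Vert\right)\leq h'(0)=\frac{1}{p}\Phi(\mathbf{x})^{1-p}\left(m\Phi(\mathbf{y})^{p}-\Phi(\mathbf{x})^{p}\right).
\]
The gap is in your final ``rearrangement,'' which does \emph{not} deliver the asserted inequality. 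For $p>0$, multiplying by $p\Phi(\mathbf{x})^{p-1}>0$ yields
\[
m\Phi^{p}(\mathbf{y})\geq\Phi^{p}(\mathbf{x})+p\Phi(\mathbf{x})^{p-1}d\Phi(\mathbf{x})(\mathbf{y}-\mathbf{x})+pm\Phi(\mathbf{x})^{p-1}\omega\left(\left\Vert \mathbf{x}-\mathbf{y}\right\Vert\right),
\]
whose modulus term carries the extra factor $p\Phi(\mathbf{x})^{p-1}$; since this factor can be smaller or larger than $1$ (and $\omega$ is not assumed nonnegative), the displayed inequality neither implies nor is implied by the one claimed in the Lemma. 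You must either prove the statement as literally written by some other device, or acknowledge that what the argument establishes is the displayed variant --- which is the honest analogue of Lemma \ref{lem1}, and strongly suggests the Lemma's statement itself is misstated.

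The second, more serious, gap is the case $p<0$, which you defer with the hope that the sign reversal is ``exactly compensated.'' It is not, and cannot be: multiplying by the now negative $p\Phi(\mathbf{x})^{p-1}$ reverses the inequality and produces $m\Phi^{p}(\mathbf{y})\leq\cdots$, the opposite of the claim. This is structural rather than bookkeeping: for $p<0$, applying the decreasing map $t\mapsto t^{p}$ to the defining inequality shows that $\Phi^{p}$ satisfies a concavity-type estimate, so the convex-type gradient inequality claimed for $\Phi^{p}$ fails outright. Concretely, take $m=1$, $\omega=0$, $p=-1$ and $\Phi(x)=x^{-1/2}$ on $(0,\infty)$: then $\Phi$ is $M_{-1}$-convex (this is just concavity of the square root), but the claimed inequality reads $\sqrt{y}\geq\sqrt{x}+(y-x)/(2\sqrt{x})$, which is false for every $y\neq x$ by strict concavity. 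So no completion of your argument can cover $p<0$ as stated; a correct write-up must restrict to $p>0$ or reverse the inequality for $p<0$. Your flagging of the boundary case $\Phi(\mathbf{x})=0$ is honest, but the proposed fix by approximation needs continuity of $d\Phi$, which is not assumed; the cleanest repair is to assume $\Phi>0$, as the paper itself does in its $p=0$ remark.
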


The analogue of this result for $p=0$ and $\omega=0$ requires the strict
positivity of the function $\Phi$ and can be stated as%
\[
\log\Phi(\mathbf{y})-\log\Phi(\mathbf{x})\geq\frac{d\Phi(\mathbf{x}%
)(\mathbf{y}-\mathbf{x})}{\Phi(\mathbf{x})},
\]
for all $\mathbf{x,y}\in C.$ The last two inequalities work in the reverse
direction in the case of $\omega$-$m$-$M_{p}$-star-concave functions.

\ While it is clear that Lemma \ref{lem4} allows us to prove
Hardy-Littlewood-P\'{o}lya type inequalities more general than those provided
by Theorem \ref{thmHLPgen}, the exploration of the world of $\omega$%
-$m$-$M_{p}$-star-convex/concave functions for $\omega\neq0$ and $m\in(0,1)$
is just at the beginning..  

\

\textbf{Data availability statement:} The authors declare that  data supporting the findings of this study are available within the article and its supplementary information files.

\end{document}